\documentclass[twoside, 12pt]{article}
\usepackage{mathrsfs}
\usepackage{amssymb, amsmath, mathrsfs, amsthm}
\usepackage{graphicx}
\usepackage{subcaption}
\usepackage{color}
\usepackage{tikz}
\usepackage[top=2cm, bottom=2cm, left=1.8cm, right=1.8cm]{geometry}
\usepackage{float, caption, subcaption}
\usepackage{diagbox}
\usepackage{algorithm}
\usepackage{algpseudocode}
\usepackage{amsmath}
\usepackage[colorlinks,
  linkcolor=blue,
 anchorcolor=blue,
 citecolor=blue]{hyperref} 
\input{epsf}

\newcommand{\E}{\mathbb{E}}
\newcommand{\Prob}{\mathbb{P}}

\newcommand{\cut}[1]{\|#1\|_\square}

\newcommand{\bd}{\begin{description}}
\newcommand{\ed}{\end{description}}
\newcommand{\bi}{\begin{itemize}}
\newcommand{\ei}{\end{itemize}}
\newcommand{\be}{\begin{enumerate}}
\newcommand{\ee}{\end{enumerate}}
\newcommand{\beq}{\begin{equation}}
\newcommand{\eeq}{\end{equation}}
\newcommand{\beqs}{\begin{eqnarray*}}
\newcommand{\eeqs}{\end{eqnarray*}}

\definecolor{DarkGreen}{rgb}{0.2, 0.6, 0.3}

\usepackage{multicol}
\usepackage{amsthm}
\usepackage{multirow}
\usepackage{makecell}
\usepackage{mathtools}

\newtheorem{theorem}{Theorem}[section]
\newtheorem{conjecture}{Conjecture}

\newtheorem{lemma}{Lemma}[section]

\newtheorem{claim}{Claim}

\begin{document}
\title{\textbf{A Solution to the Kohayakawa–Pr\"{o}mel–R\"{o}dl Conjecture} \footnote{Supported by the National Science Foundation of China
(Nos. 12471329 and 12061059), JSPS KAKENHI No. JP25KJ2077.}
}

\author{
Chuang Zhong\footnote{School of Mathematics and Statistis, Qinghai Normal University, Xining, Qinghai 810008, China. {\tt zhongchuang1998@163.com}}, \ \
Masaki Kashima \footnote{Corresponding author: Faculty of Science and Technology, Keio University, Yokohama, Kanagawa, Japan. {\tt  masaki.kashima10@gmail.com}}, \ \
Yaping Mao\footnote{Academy of Plateau
Science and Sustainability, and School of Mathematics and Statistis, Qinghai Normal University, Xining, Qinghai 810008,
China. {\tt yapingmao@outlook.com; myp@qhnu.edu.cn}}\ \ 
}
\date{}
\maketitle

\begin{abstract}
The induced Ramsey number $r_{\mathrm{ind}}(G,H)$ is defined as the minimum order of a graph $F$ such that any 2-coloring of its edges with red and blue leads to either a red induced copy of $G$ or a blue induced copy of $H$. 
In 1998, Kohayakawa, Pr\"omel, and R\"odl conjectured that for any fixed graph $G$, there exists a constant $f=f(G)$ such that the induced Ramsey number $r_{\mathrm{ind}}(G, H)$ is bounded by $|V(H)|^f$ for every graph $H$. 
In this paper, we completely solve the Kohayakawa--Pr\"omel--R\"odl conjecture, which states that for any graphs $G$ and $H$ with $|V(G)|=k\ge2$ and $|V(H)|=t\ge2$, $r_{\mathrm{ind}}(G,H)\le t^{60k^2+3}$.
This verifies the conjecture with $f(G)=60|V(G)|^2+3$.
\\[2mm]
{\bf Keywords:} Induced Ramsey number; Kohayakawa--Pr\"omel--R\"odl conjecture;
Projective planes; Cut norm; Pseudorandom graphs \\[2mm]
{\bf AMS subject classification 2020:} 05D10; 05D40; 05C80.

\end{abstract}

\section{Introduction}
All graphs in this paper are undirected, finite and simple.
For a graph $G$ and a vertex $v \in V(G)$, its neighborhood $N_{G}(v)$ is the set of vertices adjacent to $v$, and its closed neighborhood is $N_{G}[v] = N_{G}(v) \cup \{v\}$. The degree of a vertex $v \in V(G)$ is $d_{G}(v) = |N_{G}(v)|$. 
For a set of vertices $S\subseteq V(G)$, $G[S]$ is the subgraph of $G$ induced by $S$.
For two disjoint subsets $S$ and $T$ of $V(G)$, let $E_G(S,T)$ denote the set of edges of $G$ with one end in $S$ and the other end in $T$, and let $e_G(S,T)=|E_G(S,T)|$.
Moreover, we write $E_G(v,T)$ or $e_G(v,T)$ instead of $E_G(\{v\},T)$ or $e_G(\{v\},T)$ for a vertex $v$.
Undefined concepts or notation can be found in \cite{BM}.

A cornerstone of classical Ramsey theory is the study of Ramsey numbers. For given graphs $G$ and $H$, the \textit{Ramsey number} $r(G,H)$ is defined as the minimum integer $n$ such that every red/blue edge-coloring of the complete graph $K_{n}$ contains either a red copy of $G$ or a blue copy of $H$ as a subgraph.
A natural and highly restrictive variant of this concept is the induced Ramsey problem. For graphs $F$, $G$, and $H$, we write $F\xrightarrow{\mathrm{ind}}(G,H)$ if every red/blue edge-coloring of $F$ yields either a red induced copy of $G$ or a blue induced copy of $H$. Here, a red (resp. blue) induced copy of $G$ (resp. $H$) refers to an induced subgraph of $F$ that is isomorphic to $G$ (resp. $H$) and has all its edges colored red (resp. blue).

The foundational fact that such a graph $F$ always exists for any choice of $G$ and $H$ was independently proved in the 1970s by Deuber \cite{Deuber75}, Erd\H{o}s, Hajnal, and Pósa \cite{EHP75}, and R\"{o}dl \cite{R73,R76}. This seminal result sparked extensive research into \textit{induced Ramsey numbers}. For any graphs $G$ and $H$, the induced Ramsey number, denoted by $r_{\mathrm{ind}}(G,H)$, is defined as the minimum number of vertices in a graph $F$ satisfying $F\xrightarrow{\mathrm{ind}}(G,H)$. 
Moreover, when $G = H$, it is simply referred to as the induced Ramsey number of $G$, and is denoted by $r_{\mathrm{ind}}(G)$.
Since in the case of complete graphs the induced subgraph is the same as the subgraph, it is obvious that $r_{\mathrm{ind}}(K_{m},K_{n}) = r(K_{m},K_{n})$. 

Much of the early focus was on the diagonal case $r_{\mathrm{ind}}(G)$. Erd\H{o}s \cite{E75} conjectured an exponential upper bound $r_{\mathrm{ind}}(G) \leq 2^{cn}$. A major breakthrough towards this conjecture was made by Kohayakawa, Pr\"{o}mel, and R\"{o}dl \cite{KPR98}, who established a quasi-exponential upper bound of $2^{cn(\log n)^2}$. This was later improved by Conlon, Fox, and Sudakov \cite{CFS12} to $2^{cn \log n}$, bringing the bound even closer to Erd\H{o}s's original conjecture. Fox and Sudakov \cite{FS08} further developed a unified approach that yielded linear bounds for specific structures such as paths and stars. In 2025, Arag\~{a}o, Campos, Dahia, Filipe, and Marciano \cite{ACDFM25} 
fully resolved this conjecture.

However, the off-diagonal case $r_{\mathrm{ind}}(G, H)$ presents different challenges. In 1998, Kohayakawa, Pr\"omel, and R\"odl proposed the following conjecture regarding the polynomial boundedness of induced Ramsey numbers when one graph is fixed.

\begin{conjecture}{\upshape \cite{KPR98}}\label{con-1}
For any graph $G$, there is a constant $f=f(G)$ that depends only on $G$ such that, for any graph $H$ on $n$ vertices, we have
$$
r_{\mathrm{ind}}(G, H) \leq n^f.
$$    
\end{conjecture}

Towards this conjecture, Kohayakawa, Pr\"omel, and R\"odl \cite{KPR98} proved that $r_{\mathrm{ind}}(G,H)\le t^{Ck\log q}$ when
$|V(G)|=k\le |V(H)|=t$ and $q=\chi(H)\ge2$,
where $C>0$ is an absolute constant. 
To further investigate Conjecture \ref{con-1}, they verified the conjecture for every \(G\in\mathcal S\), where \(\mathcal S\) is the smallest class of finite graphs containing the $1$-vertex complete graph and closed under taking disjoint unions and joins. Since trees are not covered by $\mathcal{S}$, they also addressed them as a fundamental case, proving that for any $k$-vertex tree $T$ and $t$-vertex graph $H$, $r_{\mathrm{ind}}(T, H) \leq c k^2 t^4 \left( \frac{\log(k t^2)}{\log \log \log(k t^2)} \right)^2$. 
Cycles have also been studied extensively.
Haxell, Kohayakawa, and {\L}uczak~\cite{HKL95} proved that the
induced size-Ramsey number of $C_\ell$ is linear in $\ell$,
which implies that
$r_{\mathrm{ind}}(C_\ell,C_\ell)=O(\ell)$.
Schaefer and Shah~\cite{SS03} gave explicit constructions yielding
$r_{\mathrm{ind}}(C_\ell,B)
\le |V(B)|^{\lceil\log_2\ell\rceil+4}$
and
$r_{\mathrm{ind}}(C_\ell,T)
\le |V(T)|^{\lceil\log_2\ell\rceil+1}$,
where $\ell\ge3$, $B$ is bipartite, and $T$ is a tree.
For a varying fan graph $F_r$, Zhong et al.~\cite{ZKMZ25}
proved that $r_{\mathrm{ind}}(G,F_r)\le C_G r^2$
for every fixed graph $G$, where $C_G$ depends only on $G$.
For other results on induced Ramsey numbers, 
see~\cite{AG20star,AG20,Gorgol01,Gorgol19,GL02,KS06}.

In this paper, we resolve Conjecture~\ref{con-1} by proving
the following general bound.

\begin{theorem}\label{thm:main}
Let $G$ and $H$ be graphs with $|V(G)|=k\ge2$ and $|V(H)|=t\ge2$.
Then
 $r_{\mathrm{ind}}(G,H)\le 7t^{60k^2}\le t^{60k^2+3}$.
\end{theorem}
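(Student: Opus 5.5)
The proof is by a random-like host graph built from projective planes, with pseudorandomness measured in cut norm, as the keywords suggest. Let $q$ be a prime with $q\approx t^{c}$ for a suitable constant $c$, and let $P$ be the point--line incidence structure of $PG(2,q)$. Its incidence and collinearity graphs are $C_4$-free and have optimal spectral gap, $\lambda\approx\sqrt{q}$ against degree $\approx q$. The host $F$ will be a product of $t$ blocks $V_1,\dots,V_t$, one for each vertex of $H$, each of size $N=\mathrm{poly}(t)$. Between $V_i$ and $V_j$ we place either a pseudorandom bipartite graph of density $p$ (when $ij\in E(H)$) or its complement-type sparse pattern (when $ij\notin E(H)$). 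The densities are chosen so that every $k$-tuple of blocks carries all $2^{\binom{k}{2}}$ adjacency patterns robustly. To make induced copies of $G$ abundant I would instead index blocks by $k$-sets and use a $k$-uniform "blow-up" of $H$. The target order $7t^{60k^2}$ then reads as $N^{O(1)}$ with $N=t^{O(k^2)}$: the $k^2$ comes from requiring counting lemmas over $\binom{k}{2}$ pairs, each losing a factor $t^{O(1)}$ in the error.

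First I prove a \emph{counting lemma in cut norm}. For sets $U_1,\dots,U_k$ in distinct blocks with $|U_i|\ge \varepsilon N$, the number of induced copies of any fixed $k$-vertex pattern is within a factor $1\pm o(1)$ of its expected value, provided the cut norm $\cut{A-pJ}\le \lambda N$ with $\lambda/(pN)\le t^{-Ck}$. This is standard telescoping, replacing one edge at a time, and each step costs a factor $\varepsilon^{-2}\lambda/(pN)$. The expander mixing lemma for the projective plane supplies the cut-norm bound.

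Next comes the coloring step. Take any red/blue coloring of $F$ with no red induced $G$. I want a blue induced $H$ with one vertex in each block. I would run a greedy embedding: embed $v_1,\dots,v_t$ in turn, keeping candidate sets $C_j\subseteq V_j$ for the unembedded vertices. A candidate set shrinks by a factor about $p$ or $1-p$ when the new vertex must be adjacent or nonadjacent, and by a further factor to keep those edges blue. The key dichotomy is this: if at some step every choice of $x\in C_i$ has a red-heavy neighborhood into many $C_j$, then a supersaturation argument produces a red induced $G$. By the counting lemma, the sets $C_j$ of size $\ge\varepsilon N$ contain many induced copies of $G$, and an averaging over dependent-random-choice style red neighborhoods finds one that is entirely red. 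Otherwise some $x$ keeps at least a $t^{-O(k)}$ fraction of each $C_j$ blue-adjacent, and we proceed. After $t$ steps the sets have size at least $N\,t^{-O(kt)}$. That is too small, so the loss per step must be made independent of $t$. This is the main obstacle.

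To overcome it I would apply the dichotomy in a Kővári--Sós--Turán/Rödl fashion with $G$ fixed. The red graph has no induced $G$, so by an Erdős--Hajnal-type density increment within the pseudorandom pairs, each pair $(C_i,C_j)$ contains a subpair of relative size $t^{-O(k)}$ that is almost entirely blue. This increment is done once, by iterating over at most $\binom{k}{2}$ "refinements" rather than $t$ steps. One then embeds $H$ inside these blue-dense subpairs using the $C_4$-freeness of the projective-plane graphs: distinct vertices share at most one neighbor, so sparse-side nonadjacencies are automatic and induced-ness comes for free. The size bookkeeping should then give $N\ge t^{60k^2}$ and $|V(F)|\le 7N$, with the constant 7 absorbing the rounding of $q$ to a prime via Bertrand's postulate.
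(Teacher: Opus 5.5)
There is a genuine gap, and it lies in the coloring step itself, not only in the obstacle you flag. Your host is $t$-partite with blocks $V_1,\dots,V_t$ indexed by $V(H)$, and you look for a blue induced $H$ with one vertex in each block. No such host can work.

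Fix $ij\in E(H)$, colour red exactly the host edges between $V_i$ and $V_j$, and colour everything else blue. The red graph is bipartite, so there is no red induced $G$ whenever $G$ is non-bipartite (e.g.\ $G=K_3$). Yet every transversal copy of $H$ uses an edge between $V_i$ and $V_j$, so none of them is blue. If the blocks are independent sets, then for $H=K_t$ the host is not Ramsey for $(K_3,K_t)$ at all.

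This coloring also escapes your dichotomy. Every $x\in C_i$ is red-heavy into a single $C_j$ and blue-adjacent to none of it. So neither alternative holds, and your ``otherwise'' branch is not the negation of the first.

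The proposed repair is also unsupported, for three further reasons.
\begin{enumerate}
\item The increment needs polynomial loss $t^{-O(k)}$. Known forms of R\"odl's theorem have superpolynomial dependence, and a polynomial version for general $G$ is an open conjecture of Fox and Sudakov, not an available tool.
\item The claim that $\binom{k}{2}$ refinements suffice is unexplained, since the pairs to be refined are indexed by $E(H)$. Refining them one after another shrinks a block by $t^{-O(k)}$ per neighbour and reproduces the $t^{-O(kt)}$ loss.
\item If the block pairs are projective-plane incidence graphs (density $\approx1/q$, $\lambda\approx\sqrt q$, $N\approx q^2$), cut-norm telescoping gives an additive error of order $\lambda N^{k-1}$ per replaced factor. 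The main term is of order $p^{e}N^k$, so the error dominates once the pattern has two such edges; the per-step cost $\varepsilon^{-2}\lambda/(pN)$ is not what telescoping yields.
\end{enumerate}
Separately, the bound $|V(F)|\le 7N$ does not match a host with $t$ blocks of size $N$.

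The missing idea is to replace the single global blow-up by many edge-disjoint local ones with independent labels, and to argue in the opposite direction. The paper uses the Kohayakawa--Pr\"omel--R\"odl host on the $N=p^2+p+1\le 7t^{60k^2}$ points of a projective plane of order $p\in[t^{30k^2},2t^{30k^2}]$. Every incidence $(x,L)$ gets an independent uniform label $\lambda_L(x)\in[t]$, and $xy\in E(\Gamma)$ iff $\lambda_{L(x,y)}(x)\lambda_{L(x,y)}(y)\in E(H)$. Since each pair of points lies on exactly one line, $\Gamma[L]$ is a blow-up of $H$, and every transversal of its label classes is an induced $H$. There is no global partition for an adversary to exploit.

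The argument then runs as follows.
\begin{itemize}
\item If there is no blue induced $H$, every such transversal has a red edge. So each of the at least $N/2$ lines balanced for $U$ (Lemma~\ref{lem:labels}) carries at least $(|U|/8tp)^2$ red edges inside $U$. This gives $e_R(U)\ge|U|^2/(128t^2)$ for all $|U|\ge N/t^{10k}$ (Lemma~\ref{lem:red-density}), at the cost of only a fixed power of $t$.
\item The dependence on $k$ enters only through the greedy count $\operatorname{hom}(G,R)\ge\operatorname{hom}(K_k,R)\ge t^{-10k^2}N^k$ (Lemma~\ref{lem:local-count}).
\item It also enters through $\cut{Z_\Gamma-\sigma}\le t^{-12k^2}$ for the host's nonedge matrix, proved by McDiarmid over the $N(p+1)$ labels (Lemma~\ref{lem:nonedge-cut}).
\item Because $\Gamma$ is dense ($\sigma\ge 1/t$), the telescoping Lemma~\ref{lem:mixed-count} applies; it replaces only the nonedge factors and keeps the red ones. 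It gives $I_G(R,Z_\Gamma)\ge(t^{-m}t^{-10k^2}-mt^{-12k^2})N^k>0$, i.e.\ a red induced $G$.
\end{itemize}
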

Therefore, Theorem~\ref{thm:main}
verifies Conjecture~\ref{con-1} with
 $f(G)=60|V(G)|^2+3$.

\section{Deterministic counting lemmas}\label{sec:counting}

For a positive integer $r$, write $[r]=\{1,\ldots,r\}$.
For a graph $R$ and $U\subseteq V(R)$, write $e_R(U)=e(R[U])$.
All logarithms are natural unless another base is
specified. An empty product is interpreted as~$1$.

For a finite set $V$ of size $N$ and a real matrix $A:V\times V\to\mathbb R$,
define the normalized cut norm by
 $$\cut{A}
 =\frac1{N^2}\max_{S,T\subseteq V}
   \left|\sum_{x\in S,\,y\in T}A(x,y)\right|.$$
The sets $S$ and $T$ need not be disjoint. A scalar subtracted from a matrix denotes the constant matrix on all of $V\times V$, including the diagonal.

We begin with a weighted form of the cut-norm estimate,
which allows us to control sums involving functions
taking values in $[0,1]$.

\begin{lemma}\label{lem:weighted-cut}
Let $A:V\times V\to\mathbb R$ be a real matrix and $|V|=N$. For all functions
$f,g:V\to[0,1]$, we have
 $$\left|\sum_{x,y\in V}A(x,y)f(x)g(y)\right|
 \le \cut{A}N^2.$$
\end{lemma}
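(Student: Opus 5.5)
The plan is to use that the expression is bilinear in $(f,g)$. Fix $g$ and view
$$\Phi(f,g)=\sum_{x,y\in V}A(x,y)f(x)g(y)$$
as a function of $f\in[0,1]^V$. It is linear in $f$, so its maximum and minimum over the cube $[0,1]^V$ are attained at vertices of the cube, that is, at indicator functions $f=\mathbf 1_S$ with $S\subseteq V$. Equivalently, and more concretely, write $\Phi(f,g)=\sum_x f(x)c_x$ with $c_x=\sum_y A(x,y)g(y)$. Then
$$\sum_{x:\,c_x<0}c_x\le \Phi(f,g)\le \sum_{x:\,c_x>0}c_x,$$
since $0\le f(x)\le 1$. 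Hence $|\Phi(f,g)|\le \max_{S\subseteq V}|\Phi(\mathbf 1_S,g)|$.

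Next, for the maximizing $S$, apply the same argument to the variable $g$. The map $g\mapsto \Phi(\mathbf 1_S,g)=\sum_y g(y)d_y$ with $d_y=\sum_{x\in S}A(x,y)$ is linear, so its absolute value over $[0,1]^V$ is at most $\max_{T\subseteq V}|\Phi(\mathbf 1_S,\mathbf 1_T)|$. This gives
$$|\Phi(f,g)|\le \max_{S,T\subseteq V}\Bigl|\sum_{x\in S,\,y\in T}A(x,y)\Bigr| = \cut{A}N^2$$
by the definition of the normalized cut norm. The definition allows $S,T$ to be arbitrary, including empty and overlapping, which is exactly what the extreme-point argument produces.

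There is no real obstacle; the only point needing care is the sign handling. The bound $|\sum_x f(x)c_x|\le\max(\sum_{c_x>0}c_x,\,-\sum_{c_x<0}c_x)$ must be realized by a single set: $S=\{x:c_x>0\}$ or $S=\{x:c_x<0\}$. Both choices are allowed subsets, so each side is of the form $|\Phi(\mathbf 1_S,g)|$.
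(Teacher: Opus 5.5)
Your proof is correct, but it takes a different route from the paper.

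The paper uses the layer-cake identity $f(x)=\int_0^1\mathbf 1_{\{f(x)\ge a\}}\,da$, and likewise for $g$. This rewrites $\sum_{x,y}A(x,y)f(x)g(y)$ exactly as $\int_0^1\int_0^1\sum_{x\in S_a,\,y\in T_b}A(x,y)\,da\,db$, where $S_a=\{x: f(x)\ge a\}$ and $T_b=\{y: g(y)\ge b\}$ are level sets. Each integrand is then bounded by $\cut{A}N^2$. Both variables are handled in one step. The sets involved depend only on $f$ and $g$, not on $A$, so the weighted form is shown to be a genuine average of rectangle sums, simultaneously for every matrix $A$.

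You instead use bilinearity and replace $f$, then $g$, by an indicator chosen from the signs of the coefficients $c_x$ and then $d_y$. Your sets are therefore adapted to $A$, and you get an inequality chain rather than an identity.

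Your version is entirely finite, avoids integration, and shows explicitly that the maximum of $|\Phi|$ over $[0,1]^V\times[0,1]^V$ is attained at a pair of indicator functions, with the maximizing sets identified. The paper's version packages everything into a single averaging identity that requires no case analysis on signs. Either argument is sufficient wherever the lemma is used later in the paper.
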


\begin{proof}
For a set $S\subseteq V$, let $\mathbf 1_S$ denote its indicator
function, that is,
\[
\mathbf 1_S(x)=
\begin{cases}
1,& x\in S,\\
0,& x\notin S.
\end{cases}
\]
We first express the functions $f$ and $g$ in terms of indicator
functions of their level sets.

Fix $x\in V$. Since $0\le f(x)\le 1$, for $a\in [0,1]$, we have
\[
\mathbf 1_{\{f(x)\ge a\}}
=
\begin{cases}
1,&a\le f(x),\\
0,&f(x)<a.
\end{cases}
\]
Therefore,
$\int_0^1 \mathbf 1_{\{f(x)\ge a\}}\,da
=
\int_0^{f(x)}1\,da
=
f(x)$, and thus
$f(x)
=
\int_0^1 \mathbf 1_{\{f(x)\ge a\}}\,da$.
Similarly, for every $y\in V$, we have
$g(y)
=
\int_0^1 \mathbf 1_{\{g(y)\ge b\}}\,db$.
Thus,
\begin{align*}
f(x)g(y)
=
\left(
\int_0^1
\mathbf 1_{\{f(x)\ge a\}}\,da
\right)
\left(
\int_0^1
\mathbf 1_{\{g(y)\ge b\}}\,db
\right)=
\int_0^1\int_0^1
\mathbf 1_{\{f(x)\ge a\}}
\mathbf 1_{\{g(y)\ge b\}}
\,da\,db.
\end{align*}
Substituting this identity into the weighted sum gives
\begin{align*}
\sum_{x,y\in V}A(x,y)f(x)g(y)
&=
\sum_{x,y\in V}
A(x,y)
\int_0^1\int_0^1
\mathbf 1_{\{f(x)\ge a\}}
\mathbf 1_{\{g(y)\ge b\}}
\,da\,db\\
&=
\int_0^1\int_0^1
\sum_{x,y\in V}
A(x,y)
\mathbf 1_{\{f(x)\ge a\}}
\mathbf 1_{\{g(y)\ge b\}}
\,da\,db.
\end{align*}
Here we may interchange the finite sum and the integrals since $V$ is finite.

For fixed $a,b\in[0,1]$, define the level sets
$S_a=\{x\in V:f(x)\ge a\}$ and
$T_b=\{y\in V:g(y)\ge b\}$.
Then
$\mathbf 1_{\{f(x)\ge a\}}
=
\mathbf 1_{S_a}(x)$ and
$\mathbf 1_{\{g(y)\ge b\}}
=
\mathbf 1_{T_b}(y)$.
Hence
$\mathbf 1_{S_a}(x)\mathbf 1_{T_b}(y)=1$
if and only if both $x\in S_a$ and $y\in T_b$, implying that
\[
\sum_{x,y\in V}
A(x,y)
\mathbf 1_{\{f(x)\ge a\}}
\mathbf 1_{\{g(y)\ge b\}}
=
\sum_{\substack{x\in S_a\\ y\in T_b}}
A(x,y).
\]
Combine this with the above equation, it follows that
\[
\sum_{x,y\in V}A(x,y)f(x)g(y)
=
\int_0^1\int_0^1
\sum_{\substack{x\in S_a\\ y\in T_b}}
A(x,y)
\,da\,db.
\]

By the definition of the cut norm, every pair of subsets
$S,T\subseteq V$ satisfies
\[
\left|
\sum_{\substack{x\in S\\y\in T}}
A(x,y)
\right|
\le
\|A\|_{\square}N^2.
\]
Applying this with $S=S_a$ and $T=T_b$, and then using the triangle inequality, we obtain
\begin{align*}
\left|
\sum_{x,y\in V}A(x,y)f(x)g(y)
\right|
&\le
\int_0^1\int_0^1
\left|
\sum_{\substack{x\in S_a\\y\in T_b}}
A(x,y)
\right|
\,da\,db\\
&\le
\int_0^1\int_0^1
\|A\|_{\square}N^2
\,da\,db\\
&=
\|A\|_{\square}N^2.
\end{align*}
This proves the lemma.
\end{proof}

Using the preceding estimate, we now compare induced
embedding counts with ordinary homomorphism counts by
replacing the nonedge factors one at a time.

Let $\Gamma$ be a graph on $V$, and define its \emph{nonedge matrix} $Z_\Gamma$ by
 $$Z_\Gamma(x,y)=
 \begin{cases}
  1,&x\ne y\text{ and }xy\notin E(\Gamma),\\
  0,&\text{otherwise}.
 \end{cases}$$
In particular, $Z_\Gamma(x,x)=0$ for any $x\in V$.
Let $R$ be a subgraph of $\Gamma$, and denote its
adjacency matrix by
\[
R(x,y)=
\begin{cases}
1,&xy\in E(R),\\
0,&\text{otherwise}.
\end{cases}
\]
In particular, $R(x,x)=0$ for any $x\in V$.
For a graph $G$ with $V(G)=[k]$, let $\overline{E}(G)=\binom{[k]}2\setminus E(G)$ and $m=|\overline{E}(G)|$.
Define
\[
\operatorname{hom}(G,R)=
 \sum_{(x_1,\ldots,x_k)\in V^k}
 \prod_{ij\in E(G)}R(x_i,x_j)
\]
and
\[
I_G(R,Z_\Gamma)=
\sum_{(x_1,\ldots,x_k)\in V^k}
\left(
\prod_{ij\in E(G)}R(x_i,x_j)
\prod_{ij\in\overline{E}(G)}Z_\Gamma(x_i,x_j)\right).
\]
The first expression counts graph homomorphisms, with no injectivity requirement. In the second expression, if $x_i=x_j$ for two distinct indices, then $Z_\Gamma(x_i,x_j)=0$ and the monomial vanishes.
Thus $I_G(R,Z_\Gamma)$ counts the labeled induced embeddings of $G$ into $\Gamma$ whose edges all belong to $R$.

The next lemma adapts the standard cut-norm telescoping argument to a finite matrix setting. For the usual counting lemma, see~\cite[Section~4.5]{Zhao23}. We include the whole proof just to show clearly that only the nonedge factors are changed.

\begin{lemma}\label{lem:mixed-count}
Let $\Gamma$ be an $N$-vertex graph, let $R$ be a spanning subgraph of $\Gamma$, and suppose that $0<\sigma\le1$ and
$\cut{Z_\Gamma-\sigma}\le\varepsilon$. 
Then every graph $G$ with vertex set $[k]$ satisfies
\[
\left|I_G(R,Z_\Gamma)-\sigma^m\operatorname{hom}(G,R)\right|
 \le m\varepsilon N^k,
\]
where $m=\binom{k}{2}-e(G)$ is the number of nonedges of $G$.
\end{lemma}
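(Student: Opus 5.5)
We will use a telescoping argument that replaces the nonedge factors of $G$ by the constant $\sigma$ one at a time, controlling each replacement with Lemma~\ref{lem:weighted-cut}.

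\textbf{Setting up the hybrids.} Enumerate the nonedges of $G$ as $\overline{E}(G)=\{e_1,\ldots,e_m\}$. For $0\le s\le m$, define the hybrid sum
\[
J_s=\sum_{(x_1,\ldots,x_k)\in V^k}\prod_{ij\in E(G)}R(x_i,x_j)\prod_{\ell\le s}\sigma\prod_{\ell>s,\ e_\ell=ij}Z_\Gamma(x_i,x_j).
\]
For $s=0$ nothing has been replaced, so $J_0=I_G(R,Z_\Gamma)$. For $s=m$ every nonedge factor is the constant $\sigma$, so $J_m=\sigma^m\operatorname{hom}(G,R)$. By the triangle inequality it suffices to show $|J_{s-1}-J_s|\le\varepsilon N^k$ for each $s\in[m]$.

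\textbf{Bounding one step.} Write $e_s=ab$ with $a\ne b$. Then
\[
J_{s-1}-J_s=\sum_{(x_1,\ldots,x_k)}\bigl(Z_\Gamma(x_a,x_b)-\sigma\bigr)\,\Phi(x_1,\ldots,x_k),
\]
where $\Phi$ is the product of all remaining factors: the $R$-factors on $E(G)$, the constant $\sigma^{s-1}$, and the $Z_\Gamma$-factors on $e_\ell$ for $\ell>s$.

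Every factor of $\Phi$ takes values in $[0,1]$, since $0<\sigma\le1$ and $R$, $Z_\Gamma$ are $0/1$ matrices. Moreover, no factor of $\Phi$ involves both $x_a$ and $x_b$. Indeed, $ab$ is a nonedge, so there is no $R$-factor on the pair $ab$, and $e_s$ is the only nonedge factor on that pair.

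Fix the other coordinates $y=(x_i)_{i\ne a,b}$. Then $\Phi$ factors as $c(y)\,f_y(x_a)\,g_y(x_b)$, where:
\begin{itemize}
\item $f_y$ collects the factors involving $x_a$ (together with some $x_i$, $i\ne a,b$), so $f_y:V\to[0,1]$;
\item $g_y$ collects the factors involving $x_b$, so $g_y:V\to[0,1]$;
\item $c(y)\in[0,1]$ collects the factors involving neither $x_a$ nor $x_b$.
\end{itemize}
Applying Lemma~\ref{lem:weighted-cut} with $A=Z_\Gamma-\sigma$ gives
\[
\Bigl|\sum_{x_a,x_b}A(x_a,x_b)f_y(x_a)g_y(x_b)\Bigr|\le\varepsilon N^2.
\]
Multiplying by $c(y)\le1$ and summing over the $N^{k-2}$ choices of $y$ yields $|J_{s-1}-J_s|\le\varepsilon N^k$.

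\textbf{Diagonal and conclusion.} The diagonal needs no special treatment. The hypothesis states that the scalar $\sigma$ is subtracted on all of $V\times V$, and $Z_\Gamma(x,x)=0$, so $A(x,x)=-\sigma$. Lemma~\ref{lem:weighted-cut} sums over all pairs, including $x_a=x_b$, so this case is covered, consistent with $J_m$ counting homomorphisms without any injectivity requirement.

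Summing the $m$ one-step bounds gives $|I_G(R,Z_\Gamma)-\sigma^m\operatorname{hom}(G,R)|\le m\varepsilon N^k$.

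The only step requiring care is verifying the factorization of $\Phi$ into a function of $x_a$ times a function of $x_b$ once $y$ is fixed. This rests on $ab$ being a nonedge and on each nonedge appearing in exactly one factor; the rest of the argument is routine.
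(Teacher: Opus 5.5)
Your proposal is correct and is essentially the paper's own argument. Your hybrids $J_s$ are the paper's $T_s$, and each replacement step is handled exactly as in the paper: fix the other $k-2$ coordinates, split the remaining $[0,1]$-valued factors as $c(y)f_y(x_a)g_y(x_b)$ (possible because $ab$ is a nonedge and each pair carries at most one factor), apply Lemma~\ref{lem:weighted-cut} to $A=Z_\Gamma-\sigma$, and sum over the $N^{k-2}$ choices of $y$ and the $m$ steps; your treatment of the diagonal entries $A(x,x)=-\sigma$ also matches the paper's convention that $\sigma$ is subtracted on all of $V\times V$.
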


\begin{proof}
If $m=0$, then $G=K_k$. In this case there are no nonedge factors in the definition of $I_G(R,Z_\Gamma)$, and hence
$I_G(R,Z_\Gamma)=\operatorname{hom}(G,R)
=
\sigma^0\operatorname{hom}(G,R)$.
Thus the assertion is immediate. 
We may therefore assume that $m\ge1$.

Denote the nonedges of $G$ as $\{e_1,e_2,\ldots,e_m\}$ and let
$e_h=a_hb_h$.
For each $s\in\{0,1,\ldots,m\}$, define
\[
T_s=
\sum_{(x_1,\ldots,x_k)\in V^k}
\left(
\prod_{ij\in E(G)}R(x_i,x_j)
\sigma^s
\prod_{h=s+1}^{m}
Z_\Gamma(x_{a_h},x_{b_h})\right).
\]
The quantity $T_s$ should be viewed as an intermediate count in which the first $s$ nonedge factors have already been replaced by the constant $\sigma$, while the remaining $m-s$ nonedge factors are still kept as $Z_\Gamma$.
In particular, when $s=0$, no nonedge factor has been replaced, and so
\[
T_0
=
\sum_{(x_1,\ldots,x_k)\in V^k}
\left(
\prod_{\{u,v\}\in E(G)}R(x_u,x_v)
\prod_{h=1}^{m}
Z_\Gamma(x_{a_h},x_{b_h})\right)
=
I_G(R,Z_\Gamma).
\]
On the other hand, when $s=m$, every nonedge factor has been replaced by $\sigma$, and hence
\[
T_m
=
\sigma^m
\sum_{(x_1,\ldots,x_k)\in V^k}
\prod_{\{u,v\}\in E(G)}R(x_u,x_v)
=
\sigma^m\operatorname{hom}(G,R).
\]
It is therefore enough to estimate the change between $T_{s-1}$ and $T_{s}$ at each single replacement.

Fix $s\in[m]$ and set $e_s=ab$.
By the definition of $T_{s-1}$ and $T_s$, the two expressions differ only in the factor corresponding to this nonedge; that is, the factor is
$Z_\Gamma(x_i,x_j)$ in $T_{s-1}$, whereas it has been replaced by $\sigma$ in $T_s$. 
Therefore
\begin{align}
T_{s-1}-T_s
=
\sum_{(x_1,\ldots,x_k)\in V^k}
\left(
\prod_{ij\in E(G)}R(x_i,x_j)
\times
\sigma^{s-1}
\bigl(
Z_\Gamma(x_a,x_b)-\sigma
\bigr)
\times
\prod_{h=s+1}^{m}
Z_\Gamma(x_{a_h},x_{b_h})\right).
\label{eq:replacement-difference}
\end{align}

We now fix all coordinates
$x_r$ with $r\in[k]\setminus\{a,b\}$
and regard only $x_a$ and $x_b$ as variables.

\begin{claim}\label{clm:factor-separation}
Apart from the factor
$Z_\Gamma(x_a,x_b)-\sigma$,
no other factor in \eqref{eq:replacement-difference} depends
simultaneously on both $x_a$ and $x_b$.
\end{claim}

\begin{proof}
Every factor appearing in \eqref{eq:replacement-difference}, except for the constant factor $\sigma^{s-1}$, is associated with an unordered pair of distinct vertices of $G$.

Now $e_s=ab$ is a nonedge of $G$, so the edge product contains no factor of the form $R(x_a,x_b)$.
Among the nonedge factors, the unique factor corresponding to the pair $ab$ is the one being replaced at the $s$th step, namely $Z_\Gamma(x_i,x_j)-\sigma$.
Thus, every other factor can involve at most one of the two
variables $x_a$ and $x_b$. Thus, each remaining factor depends either only on $x_i$, only on $x_j$, or on neither of them. 
\end{proof}

By Claim~\ref{clm:factor-separation}, after the other $k-2$
coordinates have been fixed, we may collect all factors depending only on $x_i$ into a function $f:V\to[0,1]$,
all factors depending only on $x_j$ into a function $g:V\to[0,1]$, and all factors depending on neither $x_i$ nor $x_j$, together with the constant factor $\sigma^{s-1}$, into a constant $c\in[0,1]$.
Thus, for this fixed choice of the other coordinates, the contribution to $T_{s-1}-T_s$ has the form $c\sum_{x_a,x_b\in V}
\bigl(Z_\Gamma(x_a,x_b)-\sigma\bigr)f(x_a)g(x_b)$.
Since every value of $R$ and $Z_\Gamma$ is either $0$ or $1$, while $0<\sigma\le1$, then every product formed from such factors lies in $[0,1]$.

Since $\cut{Z_\Gamma-\sigma}\le\varepsilon$, we can apply Lemma~\ref{lem:weighted-cut} with the matrix
$A=Z_\Gamma-\sigma$ to obtain
\[
\left|
\sum_{x_i,x_j\in V}
\bigl(
Z_\Gamma(x_i,x_j)-\sigma
\bigr)
f(x_i)g(x_j)
\right|
\le
\varepsilon N^2.
\]
Because $0\le c\le1$, we have $$c\sum_{x_i,x_j\in V}
\bigl(Z_\Gamma(x_i,x_j)-\sigma\bigr)
f(x_i)g(x_j)\le\varepsilon N^2.$$
This estimate holds for every fixed choice of the remaining $k-2$ coordinates. 
There are exactly $N^{k-2}$ possible choices for these coordinates. Summing the above estimate over all of them yields
$|T_{s-1}-T_s|
\le
N^{k-2}\cdot\varepsilon N^2
=
\varepsilon N^k$.
Thus
$T_0-T_m
=
\sum_{s=1}^{m}(T_{s-1}-T_s)$,
and hence
$|T_0-T_m|
\le
\sum_{s=1}^{m}|T_{s-1}-T_s|
\le
m\varepsilon N^k$.
Recalling that
$T_0=I_G(R,Z_\Gamma)$
and
$T_m=\sigma^m\operatorname{hom}(G,R)$,
we obtain
$\left|
I_G(R,Z_\Gamma)
-
\sigma^m\operatorname{hom}(G,R)
\right|
\le
m\varepsilon N^k$
\end{proof}

The preceding lemma is entirely deterministic once the host graph
$\Gamma$ is fixed. In particular, it holds simultaneously for every
spanning subgraph $R\subseteq\Gamma$, and no probabilistic independence
between the red graph $R$ and the host graph $\Gamma$ is required. This is
important in our Ramsey application, since the red graph is determined
only after an arbitrary coloring of the already chosen host.

We also stress that the intermediate quantities $T_s$ are summed over
all of $V^k$, without imposing injectivity. For $T_0=I_G(R,Z_\Gamma)$,
injectivity is automatic because both $R$ and $Z_\Gamma$ have zero
diagonal; if $x_i=x_j$, then the factor corresponding to the pair
$\{i,j\}$ vanishes. After a nonedge factor $Z_\Gamma(x_i,x_j)$ is
replaced by the constant $\sigma$, this automatic exclusion of
collisions may disappear. Thus the intermediate $T_s$ should not be
interpreted as induced embedding counts. Allowing all tuples in $V^k$
is precisely what permits the remaining factors in a single replacement
step to separate as $c f(x_i)g(x_j)$.

To apply the preceding counting estimate, we also need
a lower bound on the number of red homomorphisms.
The following lemma provides such a bound under a
local edge-density condition.

\setcounter{claim}{0}

\begin{lemma}\label{lem:local-count}
Let $k\ge2$ and $0<\rho\le1/2$. Suppose that an $N$-vertex graph $R$
satisfies
$e_R(U)\ge\rho|U|^2$
for every $U\subseteq V(R)$
with $|U|\ge\rho^{k-1}N$.
Then every graph $G$ on $k$ vertices satisfies
\[
\operatorname{hom}(G,R)\ge\operatorname{hom}(K_k,R)
 \ge\rho^{k(k+1)/2}N^k
 \ge\rho^{k^2}N^k.
\]
\end{lemma}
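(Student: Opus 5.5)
The first inequality is immediate. Every tuple $(x_1,\ldots,x_k)\in V^k$ that contributes $1$ to $\operatorname{hom}(K_k,R)$ has $R(x_i,x_j)=1$ for all pairs $i<j$. In particular it has $R(x_i,x_j)=1$ for all $ij\in E(G)$, since $E(G)\subseteq E(K_k)$ and all factors are $0/1$. Hence $\operatorname{hom}(G,R)\ge\operatorname{hom}(K_k,R)$. The last inequality follows from $\rho\le 1/2<1$ and $k(k+1)/2\le k^2$. So the real content is the middle bound $\operatorname{hom}(K_k,R)\ge\rho^{k(k+1)/2}N^k$, and I would prove it by greedy embedding with nested common neighbourhoods.

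\textbf{Greedy construction.} Set $U_0=V(R)$. For $s=1,\ldots,k$ we choose $x_s\in U_{s-1}$ and put $U_s=U_{s-1}\cap N_R(x_s)$. Every tuple produced this way is a clique: $x_j\in U_{j-1}\subseteq N_R(x_i)$ for all $i<j$, and $R$ has zero diagonal, so the $x_j$ are automatically distinct. Hence $\operatorname{hom}(K_k,R)$ is at least the number of such sequences. I would prove the stronger counting statement: for $0\le s\le k-1$, and for any set $U$ with $|U|\ge\rho^{s}N$, the number of sequences $(x_1,\ldots,x_{k-s})$ built greedily inside $U$ is at least $\rho^{\binom{k-s+1}{2}}\rho^{s(k-s)}N^{k-s}$, or some similarly shaped bound. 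I would argue by induction on $k-s$, and the exponent bookkeeping should collapse to $k(k+1)/2$ at $s=0$.

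\textbf{Key step (one level of the recursion).} Let $|U|\ge\rho^{j}N$ with $j\le k-1$, so the hypothesis applies and $e_R(U)\ge\rho|U|^2$. The degrees inside $U$ then satisfy $\sum_{x\in U}|N_R(x)\cap U|=2e_R(U)\ge2\rho|U|^2$. Call $x\in U$ \emph{good} if $|N_R(x)\cap U|\ge\rho|U|$. Every vertex has degree at most $|U|$, so
\[
2\rho|U|^2\le|\text{good}|\cdot|U|+|U|\cdot\rho|U|,
\]
which gives at least $\rho|U|$ good vertices. For a good $x$, the next set $U'=N_R(x)\cap U$ has size at least $\rho|U|\ge\rho^{j+1}N$. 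This is still large enough for the hypothesis as long as $j+1\le k-1$. At the last step we only need $U_{k-1}$ to be nonempty of size at least $\rho^{k-1}N$, and we count its vertices as the choices of $x_k$, with no density needed.

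\textbf{Counting and main obstacle.} Multiplying along the greedy steps gives
\[
\prod_{s=1}^{k}|\{\text{choices for }x_s\}|\ \ge\ \prod_{s=1}^{k}\rho^{s}N=\rho^{k(k+1)/2}N^k,
\]
because at step $s$ the current set has size at least $\rho^{s-1}N$ and at least a $\rho$ fraction of it is good. This yields exactly the exponent $k(k+1)/2$. The only delicate point is making sure every set to which we apply the hypothesis has size at least $\rho^{k-1}N$. The sets used are $U_0,\ldots,U_{k-2}$, of sizes at least $\rho^0N,\ldots,\rho^{k-2}N$, and $U_{k-1}$ itself needs only a size bound. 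We never need density inside $U_{k-1}$, so the threshold in the statement suffices. The rest is routine, and I expect no serious obstacle beyond this indexing check.
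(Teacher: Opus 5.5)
Your proposal is correct and follows essentially the same route as the paper: the same handshake-lemma claim that a set $U$ with $|U|\ge\rho^{k-1}N$ has at least $\rho|U|$ vertices of degree at least $\rho|U|$ in $R[U]$, the same greedy embedding into nested common neighbourhoods, and the same product $\prod_{s=1}^{k}\rho^{s}N=\rho^{k(k+1)/2}N^k$. Your remark that the last step needs no density is a harmless refinement. The paper simply applies the claim to $W_{k-1}$ as well, which is allowed since $|W_{k-1}|\ge\rho^{k-1}N$.
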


\begin{proof}
We first establish the following claim.

\begin{claim}\label{clm:many-large-degree}
Let $W\subseteq V(R)$ satisfy
$|W|\ge\rho^{k-1}N$.
Then there are at least $\rho|W|$ vertices of $W$ whose degree in the induced subgraph $R[W]$ is at least $\rho|W|$ .
\end{claim}

\begin{proof}
By the assumption of the theorem, we have $e_R(W)\geq \rho|W|^2$.
Let $D=\{v\in W\mid d_{R[W]}(v)\ge\rho|W|\}$.
We prove that $|D|\ge\rho|W|$.
Since each vertex in $W\setminus D$ has degree less than $\rho|W|$ in $R[W]$ and every vertex in $W$ has degree less than $|W|$ in $R[W]$, by handshaking lemma, we have
$$2\rho|W|^2\leq 2e_R(W)=\sum_{v\in W}d_{R[W]}(v)<(|W|-|D|)\rho|W|+|D||W|\leq \rho|W|^2+|D||W|.$$
This implies $|D|>\rho|W|$, as desired.
\end{proof}

We now construct many ordered $k$-tuples spanning a copy of $K_k$ in $R$. 
Let $W_0=V(R)$. Since $|W_0|=N\ge \rho^{k-1}N$, Claim~\ref{clm:many-large-degree} can apply to $W_0$. 
Hence at least $\rho|W_0|=\rho N$ vertices of $W_0$ have degree at least $\rho|W_0|$ in $R[W_0]$. 
Choose one such vertex $x_1$ and set $W_1=W_0\cap N_R(x_1)$.
For each $j=2,3,\ldots ,k$, we recursively apply Claim~\ref{clm:many-large-degree} to $W_{j-1}$ to find a vertex $x_j\in W_{j-1}$ with $|W_{j-1}\cap N_R(x_j)|\geq \rho|W_{j-1}|$ and set $W_j=W_{j-1}\cap N_R(x_j)$.
Note that we have $|W_{j-1}|\geq \rho^{j-1}N\geq \rho^{k-1}N$ in each step, so Claim~\ref{clm:many-large-degree} can apply.

By the construction, it follows that $x_j\in \bigcap_{\ell=1}^{j-1}N_R(x_\ell)$ for all $j\in \{2,3,\ldots ,k\}$, and hence $\{x_1,x_2,\ldots,x_k\}$ is a clique of $R$.
Furthermore, since we have $\rho|W_{j-1}|\geq \rho^jN$ different choice of $x_j$ for each $j$th step, $\operatorname{hom}(K_k,R)$, the number of ordered copies of $K_k$ in $R$ is bounded by
\[
\operatorname{hom}(K_k,R)\geq \prod_{j=1}^{k}\rho^j N=\rho^{\frac{k(k+1)}{2}}N^k\geq \rho^{k^2}N^k,
\]
where the last inequality follows from $k\geq 2$ and $0<\rho\leq 1$. 

For any graph $G$ with $k$ vertices, each ordered copy of $K_k$ in $R$ contains an ordered copy of $G$ in itself, and thus we have $\operatorname{hom}(G,R)\geq \operatorname{hom}(K_k,R)$, as desired.
\end{proof}

\section{The projective-plane}\label{sec:host}

We now introduce the random host graph.
For a prime $p$, let $N=p^2+p+1$.
A finite projective plane of order $p$ is an incidence structure $\mathcal P=(V,\mathcal L)$ with its point set as $V$ and its line set as $\cal L$ with the following properties
(see~\cite{Hirschfeld1998}):
\begin{itemize}
\item $|V|=N$ and $|\mathcal L|=N$;
\item every line $L\in\mathcal L$ contains exactly $p+1$ points;
\item every point $v\in V$ lies on exactly $p+1$ lines;
\item any two distinct lines intersect in exactly one point
\item any two distinct points determine a unique line.
\end{itemize}
It is well known that finite projective planes exist whenever $p$ is a prime power.

We will use the following elementary incidence calculation.

\begin{lemma}\label{lem:incidence}
Let $\mathcal{P}=(V,\mathcal{L})$ be a projective plane of order $p$.
For $U\subseteq V$, if $|U|\ge64p$, then at least $\frac{3N}{4}$ lines satisfy
\begin{equation}\label{eq:large-line-intersection}
 |U\cap L|\ge\frac{|U|}{4p}.
\end{equation}
\end{lemma}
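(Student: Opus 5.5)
We argue by a second-moment (variance) count over lines. For each line $L$ let $X_L=|U\cap L|$, and write $u=|U|$. Since every point lies on exactly $p+1$ lines,
\[
\sum_{L\in\mathcal L}X_L=u(p+1).
\]
Since any two distinct points lie on exactly one common line,
\[
\sum_{L}X_L(X_L-1)=u(u-1).
\]
Hence $\sum_L X_L^2=u(u-1)+u(p+1)=u(u+p)$. The mean of $X_L$ over the $N$ lines is $\mu=u(p+1)/N$. Because $N=p^2+p+1$, we have $\mu\approx u/p$; more precisely $\mu\ge u/(p+1)\cdot\frac{(p+1)^2}{N}\ge u/(p+1)$, and in particular $\mu\ge u/(2p)$. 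So the threshold $u/(4p)$ is at most roughly half the mean.

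Next we compute the variance:
\[
\sum_L (X_L-\mu)^2=\sum_L X_L^2-N\mu^2=u(u+p)-\frac{u^2(p+1)^2}{N}.
\]
Since $(p+1)^2=N+p$, this equals $u(u+p)-u^2-u^2p/N=up-u^2p/N\le up$. Let $B$ be the set of lines with $X_L<u/(4p)$. For $L\in B$ we have $\mu-X_L>\mu-u/(4p)\ge u/(2(p+1))-u/(4p)$. We want a clean lower bound on this gap, say $\mu-X_L\ge u/(8p)$ once $p\ge1$; we check $\frac1{2(p+1)}-\frac1{4p}\ge\frac1{8p}$, i.e.\ $\frac{1}{2(p+1)}\ge \frac{3}{8p}$, i.e.\ $8p\ge 6p+6$, i.e.\ $p\ge3$. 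For $p=2$ we use the sharper bound $\mu=3u/7$, which is fine directly. Chebyshev then gives
\[
|B|\cdot\frac{u^2}{64p^2}\le up,\qquad\text{so}\qquad |B|\le\frac{64p^3}{u}\le p^2\cdot\frac{64p}{u}.
\]

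The main obstacle is the constant: with $u\ge64p$ this only yields $|B|\le p^2$, which is about $N$ rather than $N/4$. So we must use a tighter gap. The approach is to work with the exact $\mu=u(p+1)/N$ and gap $\mu-u/(4p)$, which is close to $\frac{3u}{4p}$ for large $p$. This gives $|B|\le up\big/(9u^2/(16p^2))\cdot(1+o(1))=\frac{16p^3}{9u}\le \frac{p^2}{36}$, comfortably at most $N/4$. To make this rigorous for all primes $p$, we bound the gap as $\mu-u/(4p)\ge u\left(\frac{p+1}{p^2+p+1}-\frac1{4p}\right)\ge \frac{u}{2p}$, valid for $p\ge2$: one checks that $4p(p+1)-(p^2+p+1)\ge 2(p^2+p+1)$, i.e.\ $p^2-p-3\ge0$, which holds for $p\ge3$. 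For $p=2$ we verify directly with $\mu=3u/7$, giving a gap of $3u/7-u/8=17u/56\ge u/4$. With gap at least $u/(2p)$ (or at least $u/4$ when $p=2$), we obtain $|B|\le up\cdot 4p^2/u^2=4p^3/u\le p^2/16<N/4$. Therefore at least $3N/4$ lines satisfy \eqref{eq:large-line-intersection}.
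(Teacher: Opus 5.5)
Your final argument is correct and is essentially the paper's proof. Both use the same incidence identities $\sum_L X_L=u(p+1)$ and $\sum_L X_L^2=u(u+p)$, the variance bound $\sum_L(X_L-\mu)^2\le up$, and a Chebyshev count of bad lines; the only difference is that the paper takes the threshold $\mu/2$ with $\mu\ge u/(2p)$ to get at most $16p^3/u\le p^2/4$ bad lines, whereas you use the gap $\mu-u/(4p)\ge u/(2p)$ to get at most $p^2/16$. One small algebra slip: the condition $\frac{p+1}{N}-\frac{1}{4p}\ge\frac{1}{2p}$ reduces to $p^2+p-3\ge0$ (not $p^2-p-3\ge0$), so it already holds at $p=2$ and your separate check there is unnecessary, though harmless.
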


\begin{proof}
Counting incidences with points in $U$ gives $\sum_{L\in\cal L}|U\cap L|=(p+1)|U|$.
Counting ordered pairs of distinct points in $U$ gives $\sum_{L\in\cal L}|U\cap L|(|U\cap L|-1)=|U|(|U|-1)$, since every such pair belongs to a unique line. 
Consequently, $\sum_{L\in\cal L}|U\cap L|^2=\sum_{L\in \mathcal{L}}|U\cap L|(|U\cap L|-1)+\sum_{L\in \mathcal{L}}|U\cap L|=|U|^2+p|U|$.

Now we set $\mu=\frac{(p+1)|U|}{N}$.
Since $(p+1)^2=N+p$, we have
\begin{align*}
    \sum_{L\in \mathcal{L}}(|U\cap L|-\mu)^2
    &=\sum_{L\in \mathcal{L}}|U\cap L|^2-2\sum_{L\in \mathcal{L}}|U\cap L|\frac{(p+1)|U|}{N}+\sum_{L\in \mathcal{L}}\left(\frac{(p+1)|U|}{N}\right)^2\\
    &=|U|^2+p|U|-\frac{2(p+1)^2|U|^2}{N}+\frac{(p+1)^2|U|^2}{N}\\
    &=|U|^2+p|U|-\frac{(N+p)|U|^2}{N}=p|U|\left(1-\frac{|U|}{N}\right)\leq p|U|,
\end{align*}
where the last inequality follows from $|U|\leq |V|=N$.
Suppose that $|U|\geq 64p$.
Each line $L$ with $|U\cap L|<\frac{\mu}{2}$ contributes to the above sum by at least $\frac{\mu^2}{4}$, so the number of such lines is at most $\frac{4p|U|}{\mu^2}$.
By the definition of $\mu$, we have $\mu\geq \frac{|U|}{2p}$, and hence $$\frac{4p|U|}{\mu^2}\leq \frac{16p^3}{|U|}\leq \frac{16p^3}{64p}=\frac{p^2}{4}<\frac{N}{4}.$$
Thus, at least $\frac{3N}{4}$ lines satisfies $|U\cap L|\geq \frac{\mu}{2}\geq \frac{|U|}{4p}$, as required.
\end{proof}

The randomness will be placed on incidences rather than on points.
Fix a graph $H$ with $V(H)=[t]$. For each incidence $(x,L)$ with
$x\in L$, choose a label $\lambda_L(x)$ independently and uniformly
from $[t]$. There are $N(p+1)$ independent labels. In particular,
$\lambda_L(x)$ and $\lambda_{L'}(x)$ are independent when $L\ne L'$.

For distinct $x,y\in V$, let $L(x,y)$ be their unique common line.
Define a random graph $\Gamma$ on $V$ by
\begin{equation}\label{eq:host-rule}
 xy\in E(\Gamma)~\text{if and only if}~
 \lambda_{L(x,y)}(x)\lambda_{L(x,y)}(y)\in E(H).
\end{equation}
This is the projective-plane random graph introduced in the form used
here in~\cite[Section~2.1]{KPR98}. For $L\in\cal L$ and $i\in[t]$, set $L_i=\{x\in L\mid\lambda_L(x)=i\}$.
For distinct $i,j\in[t]$, the bipartite graph between $L_i$ and $L_j$ is complete if $ij\in E(H)$ and empty if $ij\notin E(H)$. Moreover, each $L_i$
is an independent set, since $H$ has no loops.
Therefore, every choice of one point from each of the $t$ nonempty classes $L_i$ gives an induced copy of $H$ in $\Gamma$. 

Set
\begin{equation}\label{eq:sigma}
 \sigma=1-\frac{2e(H)}{t^2}.
\end{equation}
For distinct $x,y$, $xy$ is an edge of $\Gamma$ if $x$ and $y$ map to some edge of $H$ by $\lambda_{L(x,y)}$.
Since there are $2e(H)$ possibilities of such maps, for the nonedge matrix, we conclude that
\begin{equation}\label{eq:nonedge-expectation}
 \E Z_\Gamma(x,y)=\sigma.
\end{equation}
Also, we have
\begin{equation}\label{eq:sigma-lower}
 \sigma\ge1-\frac{t(t-1)}{t^2}=\frac1t.
\end{equation}
The diagonal entries of $Z_\Gamma$ are zero rather than having $\sigma$; their contribution will be retained explicitly in the
cut-norm estimate.

\section{Properties of the random host}\label{sec:host-properties}

We first show that all sufficiently large vertex sets meet every label
class on many lines. This property depends only on the random labels,
not on any subsequent edge-coloring.

\begin{lemma}\label{lem:labels}
Let $t\ge2$, let $b\ge1$, and suppose that
$p\ge512t^{b+2}$.
With probability at least $1-e^{-2N}$, every $U\subseteq V$ with $|U|\ge \frac{N}{t^b}$ satisfies that at least $\frac{N}{2}$ lines $L\in\cal L$ satisfy
\begin{equation}\label{eq:balanced-line}
 |U\cap L_i|\ge\frac{|U|}{8tp}
 \quad\text{for every }i\in[t].
\end{equation}
\end{lemma}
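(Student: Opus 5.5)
The plan is a first-moment (union bound) argument over all sets $U$. For each fixed $U$, Lemma~\ref{lem:incidence} provides many lines meeting $U$ in a large set. The label classes on each such line are controlled by a Chernoff bound. Independence of labels across lines then makes it exponentially unlikely that many of these lines fail. The $2^N$ choices of $U$ are absorbed by making the per-line failure probability a small absolute constant.

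\textbf{Step 1 (deterministic).} Fix $U$ with $|U|\ge N/t^b$. Since $N\ge p^2$ and $p\ge 512t^{b+2}$, we have $|U|\ge p^2/t^b\ge 512t^2p\ge 64p$. So by Lemma~\ref{lem:incidence} the set $\mathcal L_U$ of lines with $|U\cap L|\ge |U|/(4p)$ has $|\mathcal L_U|\ge 3N/4$. For such a line,
\[
|U\cap L|\ge \frac{|U|}{4p}\ge \frac{p}{4t^b}\ge 128t^2 .
\]

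\textbf{Step 2 (one line).} Fix $L\in\mathcal L_U$ and $i\in[t]$. The labels $\lambda_L(x)$ for $x\in U\cap L$ are independent and uniform, so $|U\cap L_i|$ is binomial with mean $\mu=|U\cap L|/t\ge 128t$. The target $|U|/(8tp)$ is at most $\mu/2$, so the lower-tail Chernoff bound $\Prob(X\le\mu/2)\le e^{-\mu/8}$ gives failure probability at most $e^{-16t}$. A union bound over $i\in[t]$ shows that $L$ is \emph{bad} (violates \eqref{eq:balanced-line}) with probability at most $q:=te^{-16t}\le 2e^{-32}$.

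\textbf{Step 3 (many lines).} The badness events for distinct lines depend on disjoint sets of incidence labels, so they are mutually independent. The conclusion for $U$ fails only if at least $|\mathcal L_U|-N/2\ge N/4$ lines of $\mathcal L_U$ are bad. This has probability at most
\[
\binom{N}{N/4}q^{N/4}\le (4eq)^{N/4}\le \exp\!\Bigl(\tfrac N4(\log(8e)-32)\Bigr)\le e^{-7N}.
\]
Here one rounds $N/4$ up and uses $\binom{N}{s}\le (eN/s)^s$.

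\textbf{Step 4 (union over $U$).} Summing over at most $2^N\le e^{0.7N}$ sets $U$ gives total failure probability at most $e^{-6N}\le e^{-2N}$.

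The main obstacle is the balance in Step 4: the union bound over $2^N$ sets must be beaten. This is exactly why the per-line failure probability must be an absolute constant far below $1/(4e)$. That in turn forces $|U\cap L|$ to be of order $t^2$ (so that $\mu=|U\cap L|/t\gtrsim t$), which is what the hypothesis $p\ge 512t^{b+2}$ supplies. The remaining care points are the independence across lines, which holds because labels are attached to incidences rather than points, and the mild rounding in the binomial estimate.
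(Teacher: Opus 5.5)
Your proposal is correct and follows essentially the same route as the paper: Lemma~\ref{lem:incidence} gives at least $3N/4$ heavy lines, a Chernoff bound plus a union over the $t$ labels gives a per-line failure probability of at most $te^{-16t}$, independence across lines comes from the labels being attached to incidences, and a union bound runs over all $2^N$ sets $U$. The only difference is cosmetic: you bound the tail by $\binom{N}{\lceil N/4\rceil}q^{\lceil N/4\rceil}\le(4eq)^{N/4}$, whereas the paper uses the cruder $2^N\eta_0^{N/4}$, and either estimate suffices.
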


\begin{proof}
Fix a set $U\subseteq V$ with $|U|\ge \frac{N}{t^b}$.
Since $N\ge p^2$ and $p\ge512t^{b+2}$, we have $|U| \ge \frac{p^2}{t^b} \ge 64p$.
Let $$\mathcal Q(U) = \left\{ L\in\mathcal L: |U\cap L|\ge\frac{|U|}{4p} \right\}.$$
By Lemma~\ref{lem:incidence}, $|\mathcal Q(U)|\ge\frac{3N}{4}$.
Note that $\mathcal Q(U)$ depends only on $U$ and the incidence
structure of the projective plane, before any random labels are exposed.

Now fix a line $L\in\mathcal Q(U)$ and a label $i\in[t]$. Since the
labels $\{\lambda_L(x)\mid x\in L\}$ are independent and uniformly distributed on $[t]$, it means that the random variable
$X_i=|U\cap L_i|$ has distribution $X_i\sim\operatorname{Bin}(|U\cap L|,1/t)$, that is, $\mu_i=\frac{|U\cap L|}{t}$.
Since $L\in\mathcal Q(U)$, $\mu_i\ge\frac{|U|}{4tp}$.
Thus $\frac{|U|}{8tp}\le\frac{\mu_i}{2}$, and hence $\left\{ X_i<\frac{|U|}{8tp} \right\} \subseteq \left\{ X_i<\frac{\mu_i}{2} \right\}$.
By the Chernoff bound, we have $\Pr(X_i<\frac{\mu_i}{2})\le e^{-\mu_i/8}$, and hence 
$$\Prob\left( |U\cap L_i|<\frac{|U|}{8tp} \right) \le \Prob\left(|U\cap L_i|<\frac{\mu_i}{2}\right) \le \exp \left(-\frac{\mu_i}{8}\right) \le \exp\left(-\frac{|U|}{32tp}\right).$$

If $L$ fails \eqref{eq:balanced-line}, then there must exist some
$i\in[t]$ such that $|U\cap L_i|<\frac{|U|}{8tp}$.
Thus, by the union bound over the $t$ labels, we have
$\Prob\left(
L\text{ fails }\eqref{eq:balanced-line}
\right)
\le
t\exp\left(-\frac{|U|}{32tp}\right)$.
No independence among the random variables
$|U\cap L_1|,\ldots,|U\cap L_t|$ is needed here.
Set $s=\frac{N}{t^b}$ and $\eta_0 = t\exp\left(-\frac{s}{32tp}\right)$.
Since every set under consideration satisfies $|U|\ge s$, then $\Prob\left( L\text{ fails }\eqref{eq:balanced-line} \right) \le \eta_0$ for every $L\in\mathcal Q(U)$.
Moreover, since $N\ge p^2$, we have $s=\frac{N}{t^b}\ge\frac{p^2}{t^b}$, and hence $\eta_0 \le t\exp\left(-\frac{p}{32t^{b+1}}\right)$.
Using $p\ge512t^{b+2}$, we obtain that $\eta_0\le te^{-16t}<1$.

For this fixed set $U$, the failure events corresponding to distinct
lines are independent. Indeed, whether a line $L$ satisfies
\eqref{eq:balanced-line} depends only on the incidence labels
$\{\lambda_L(x)\mid x\in L\}$.
If $L\ne L'$, these are disjoint collections of point--line incidence
variables. Even when $L$ and $L'$ meet at a point $x$, the labels
$\lambda_L(x)$ and $\lambda_{L'}(x)$ are independent by construction.

Suppose that fewer than $\frac{N}{2}$ lines satisfy
\eqref{eq:balanced-line}. Since $|\mathcal Q(U)|\ge\frac{3N}{4}$,
it follows that at least $\frac{3N}{4}-\frac N2=\frac N4$
lines in $\mathcal Q(U)$ must fail
\eqref{eq:balanced-line}. 
Set $r=\lceil \frac{N}{4}\rceil$.
Using the binomial expansion, we have
\begin{align*}
    \Prob\left(\text{at least }r\text{ lines in }\mathcal Q(U) \text{ fail}~\eqref{eq:balanced-line}\right)
    =\sum_{\ell=r}^{N}\binom{N}{\ell}\eta_0^k(1-\eta_0)^{N-k}\leq \sum_{\ell=r}^N\binom{N}{\ell}\eta_0^r\leq 2^N\eta_0^r\leq 2^N\eta_0^{N/4}.
\end{align*}
Note that there are at most $2^N$ subsets of $V$. Thus a union bound over all
sets with $|U|\ge s$ shows that the probability that there exists some
$U\subseteq V$ with $|U|\ge s$
for which fewer than $\frac{N}{2}$ lines satisfy
\eqref{eq:balanced-line} is at most $4^N\eta_0^{N/4}$.
Since $\eta_0 = t\exp\left(-\frac{s}{32tp}\right)$, $N\ge p^2$ and $p\ge512t^{b+2}$, we obtain that
$$4^N\eta_0^{N/4} \le \exp\left[ N\left( \log4+\frac14\log t-4t \right) \right] \le e^{-2N}$$ with $t\ge2$.

Thus, with probability at least $1-e^{-2N}$,
every set $U\subseteq V$ with $|U|\ge\frac{N}{t^b}$
has at least $\frac{N}{2}$ lines satisfying
\eqref{eq:balanced-line}.
\end{proof}

The conclusion of Lemma~\ref{lem:labels} is a property of a realized labeled host. Since the union bound ranges over every vertex set, it also applies to sets selected after observing the labels or after choosing an edge-coloring.

We now extract a deterministic consequence of this property.

\begin{lemma}\label{lem:red-density}
Suppose that a realization of the labeled host graph satisfies the conclusion of Lemma~\ref{lem:labels}. 
In every red--blue edge-coloring of this host
with no blue induced copy of $H$, the red graph $R$ satisfies
$e_R(U)\ge\frac{|U|^2}{128t^2}$
when $|U|\ge \frac{N}{t^b}$.
\end{lemma}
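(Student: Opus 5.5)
Fix a set $U\subseteq V$ with $|U|\ge N/t^b$. By the conclusion of Lemma~\ref{lem:labels}, there is a family $\mathcal L'\subseteq\mathcal L$ of at least $N/2$ lines, each satisfying \eqref{eq:balanced-line}, i.e.\ $|U\cap L_i|\ge |U|/(8tp)$ for every $i\in[t]$. The plan is to find, on each such line, many red edges inside $U\cap L$. We then sum over lines, using that distinct lines share no pair of points, so edges counted on different lines are distinct.

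Fix $L\in\mathcal L'$ and write $A_i=U\cap L_i$, so $|A_i|\ge a:=|U|/(8tp)$. Inside $L$ the graph $\Gamma$ restricted to $\bigcup_i A_i$ is a blow-up of $H$: for $ij\in E(H)$ the pair $(A_i,A_j)$ is complete, and otherwise it is empty. Every transversal $(y_1,\dots,y_t)\in A_1\times\cdots\times A_t$ therefore spans an induced copy of $H$ in $\Gamma$. Since the coloring has no blue induced copy of $H$, every transversal must contain a red edge; this uses $e(H)\ge1$. If $e(H)=0$, a transversal is itself an edgeless induced copy of $H$ that is vacuously blue, so no such coloring exists and the statement is vacuous.

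Next, count the transversals hitting a given edge. A fixed edge $y_iy_j$ with $y_i\in A_i$, $y_j\in A_j$ lies in exactly $\prod_{l\ne i,j}|A_l|$ transversals. Summing over red edges between classes $A_i,A_j$ with $ij\in E(H)$ gives
\[
\prod_{l}|A_l|\le \sum_{ij\in E(H)} e_R(A_i,A_j)\prod_{l\ne i,j}|A_l|.
\]
Dividing by $\prod_l|A_l|$, we get $\sum_{ij\in E(H)} e_R(A_i,A_j)/(|A_i||A_j|)\ge1$. Hence some edge $ij$ of $H$ has red density at least $1/e(H)\ge 2/t^2$ between $A_i$ and $A_j$. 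This gives $e_R(U\cap L)\ge \frac{2}{t^2}|A_i||A_j|\ge \frac{2}{t^2}a^2$.

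To finish, each pair of points lies on a unique line, so
\[
e_R(U)\ge\sum_{L\in\mathcal L'}e_R(U\cap L)\ge \frac N2\cdot\frac{2}{t^2}\cdot\frac{|U|^2}{64t^2p^2}=\frac{N|U|^2}{64t^4p^2}.
\]
This carries a $t^4$ rather than the $t^2$ the statement claims.

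The main obstacle is this lost factor of $t^2$ relative to the stated constant $1/(128t^2)$. To recover it, I would not pick a single best pair $ij$. Instead I would use the averaged inequality directly: it gives
\[
\sum_{ij\in E(H)} e_R(A_i,A_j)\ge \min_{ij}|A_i||A_j|\ge a^2,
\]
where the minimum is over all pairs. I would also use $N/p^2\ge1$. This yields $e_R(U)\ge \frac N2 a^2\ge |U|^2/(128t^2)$, which is exactly the stated bound.
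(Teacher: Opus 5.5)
Your final argument is correct and is essentially the paper's proof. On each good line, the transversal double count gives $\sum_{ij\in E(H)} e_R(A_i,A_j)\ge \min|A_i||A_j|\ge a^2$ red edges inside $U\cap L$. Summing over the at least $N/2$ good lines, which share no pairs, and using $N\ge p^2$ yields $|U|^2/(128t^2)$; the preliminary ``best pair'' step that loses a factor $t^2$ is unnecessary and can simply be dropped.
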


\begin{proof}
Fix such a coloring and a set $U$ with $|U|\ge N/t^b$.
For a line $L$ satisfying~\eqref{eq:balanced-line}, let $W_i=U\cap L_i$ and $q=\frac{|U|}{8tp}$.
Each $W_i$ has at least $q>0$ points and is therefore nonempty.
Choose one point independently and uniformly from each $W_i$.
The resulting transversal is an induced copy of $H$ in the host.
Since no such copy is entirely blue, every transversal contains at
least one red edge.

For $ij\in E(H)$, let $r_{ij}$ be the number of red edges between $W_i$ and $W_j$. 
Since each edge counted as $r_{ij}$ contributes to $\prod_{\ell\notin \{i,j\}}|W_\ell|$ many transversals, we have $$\sum_{ij\in E(H)}\left(r_{ij}\prod_{\ell\notin \{i,j\}}|W_\ell|\right)\geq \prod_{1\leq \ell\leq t}|W_\ell|,$$ and hence
\[
 1\le\sum_{ij\in E(H)}\frac{r_{ij}}{|W_i||W_j|}
 \le\frac1{q^2}\sum_{ij\in E(H)}r_{ij}
 =\frac{e_R(U\cap L)}{q^2}.
\]
Thus, each line satisfying~\eqref{eq:balanced-line} contributes at least $q^2$ red edges in $U$. 
Every unordered pair of points lies on exactly one line, so no red edge is counted twice when we sum over lines.
By Lemma~\ref{lem:labels}, at least $\frac{N}{2}$ lines satisfy~\eqref{eq:balanced-line}, and hence
\[
 e_R(U)\ge \frac{N}{2}q^2=
 \frac N2\left(\frac{|U|}{8tp}\right)^2
 =\frac{N}{p^2}\frac{|U|^2}{128t^2}
 \ge\frac{|U|^2}{128t^2}.
\]
If $H$ has no edges, the transversal is automatically blue, and hence
the hypothesis of a coloring without a blue induced $H$ is impossible.
This also covers that case.
\end{proof}

We next establish a second property of the random host graph, namely
that its nonedge matrix $Z_\Gamma$ is close, in cut norm, to the constant
matrix of density $\sigma$. This property is proved independently of the
label-distribution property established. We
derive a separate failure probability for it and later combine the two
estimates by a union bound.

\begin{lemma}[McDiarmid~\cite{McD89}]
\label{lem:bounded-differences}
Let $X_1,\ldots,X_M$ be independent random variables taking values in
finite sets, and let $F=f(X_1,\ldots,X_M)$ be real-valued. Suppose that
changing only the $i$th coordinate changes $f$ by at most $c_i$, where
$c_i\ge0$. If $\sum_i c_i^2>0$, then, for every $a>0$,
\[
 \Prob(|F-\E F|\ge a)
 \le2\exp\left(-\frac{2a^2}{\sum_{i=1}^M c_i^2}\right).
\]
If all $c_i$ are zero, then $F$ is constant on the support and the
same tail probability is zero.
\end{lemma}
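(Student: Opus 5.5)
The statement just given is McDiarmid's bounded differences inequality. The plan is the standard martingale (Doob) proof combined with Hoeffding's lemma.

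\emph{Doob martingale.} Set $F_i=\E[F\mid X_1,\ldots,X_i]$ for $0\le i\le M$, so that $F_0=\E F$ and $F_M=F$, and put $D_i=F_i-F_{i-1}$. Then $F-\E F=\sum_i D_i$ and $\E[D_i\mid X_1,\ldots,X_{i-1}]=0$. Because the $X_j$ are independent and take values in finite sets, we have the explicit formula
\[
F_i=\sum_{x_{i+1},\ldots,x_M}f(X_1,\ldots,X_i,x_{i+1},\ldots,x_M)\prod_{j>i}\Prob(X_j=x_j).
\]
\emph{Range of each increment.} Conditional on $X_1,\ldots,X_{i-1}$, the increment $D_i$ is a function of $X_i$ alone. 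Define
\[
U_i=\max_{x}\E[F\mid X_1,\ldots,X_{i-1},X_i=x]-F_{i-1},\qquad L_i=\min_{x}\E[F\mid X_1,\ldots,X_{i-1},X_i=x]-F_{i-1}.
\]
By the formula above (this is where independence is used), $U_i-L_i$ is an average over $x_{i+1},\ldots,x_M$ of differences $f(\ldots,x,\ldots)-f(\ldots,x',\ldots)$ that change only the $i$th coordinate. Each such difference is at most $c_i$, so $L_i\le D_i\le U_i$ with $U_i-L_i\le c_i$.

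\emph{Hoeffding's lemma.} If $Y$ has mean zero and lies in an interval of length $c$, then $\E e^{sY}\le e^{s^2c^2/8}$. The proof uses convexity to bound $e^{sy}$ by the chord between the endpoints, and then checks that the log of the resulting expression has second derivative at most $c^2/4$. Apply this conditionally on $X_1,\ldots,X_{i-1}$ to get $\E[e^{sD_i}\mid X_1,\ldots,X_{i-1}]\le e^{s^2c_i^2/8}$. Iterating the tower property from $i=M$ down to $i=1$ gives
\[
\E e^{s(F-\E F)}\le\exp\Bigl(\tfrac{s^2}{8}\sum_i c_i^2\Bigr).
\]

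\emph{Tail bound.} Markov's inequality gives $\Prob(F-\E F\ge a)\le\exp\bigl(-sa+s^2\sum_i c_i^2/8\bigr)$. Choosing $s=4a/\sum_i c_i^2$ makes this $\exp\bigl(-2a^2/\sum_i c_i^2\bigr)$. Applying the same bound to $-f$ and taking a union bound yields the factor $2$.

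\emph{Degenerate case.} If every $c_i=0$, then $f$ is constant on the product of the supports, so $F=\E F$ almost surely and the tail probability is $0$.

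The main obstacle is the range step. One must check that $D_i$ lies in an interval of length $c_i$ whose endpoints are determined by $X_1,\ldots,X_{i-1}$, rather than merely having $|D_i|\le c_i$. The weaker statement would only give the constant $1/2$ in the exponent instead of $2$. The conditional Hoeffding lemma requires exactly these predictable endpoints, and independence is what supplies them.
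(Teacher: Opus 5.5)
Your proposal is correct. The Doob martingale whose increments lie in predictable intervals $[L_i,U_i]$ of length at most $c_i$, the conditional Hoeffding lemma, and the choice $s=4a/\sum_i c_i^2$ give exactly the stated bound, and your degenerate-case remark is also right. The paper gives no proof of its own and only cites McDiarmid, whose standard argument is this same martingale route, so your approach matches the intended one.
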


\begin{lemma}\label{lem:nonedge-cut}
Let $\Gamma$ be the random graph defined in~\eqref{eq:host-rule}, and let $\sigma$ be as in~\eqref{eq:sigma}. 
If 
 $\varepsilon N\ge2$
and
 $\varepsilon^2p\ge64$,
then
\[
\Prob\bigl(\cut{Z_\Gamma-\sigma}>\varepsilon\bigr)
 \le2e^{-2N}.
\]
\end{lemma}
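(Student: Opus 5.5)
We prove Lemma~\ref{lem:nonedge-cut} by fixing a pair $S,T\subseteq V$, bounding the deviation of $F_{S,T}=\sum_{x\in S,y\in T}Z_\Gamma(x,y)$ with McDiarmid's inequality (Lemma~\ref{lem:bounded-differences}), and then taking a union bound over the $4^N$ pairs $(S,T)$. The underlying independent variables are the $M=N(p+1)$ incidence labels $\lambda_L(x)$.

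\textbf{Step 1: the expectation.} First we separate the diagonal. By \eqref{eq:nonedge-expectation}, $\E Z_\Gamma(x,y)=\sigma$ for $x\ne y$, while $Z_\Gamma(x,x)=0$. Hence
\[
\Bigl|\E F_{S,T}-\sigma|S||T|\Bigr|=\sigma|S\cap T|\le N.
\]
Since $\varepsilon N\ge2$, we have $N\le \varepsilon N^2/2$. So it suffices to show that, with the stated probability, every pair satisfies
\[
|F_{S,T}-\E F_{S,T}|\le \varepsilon N^2/2 .
\]

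\textbf{Step 2: the bounded differences.} Changing a single label $\lambda_L(x)$ can only affect the entries $Z_\Gamma(x,y)$ and $Z_\Gamma(y,x)$ with $y\in L\setminus\{x\}$. This is because the pair $\{x,y\}$ uses the label $\lambda_{L(x,y)}(x)$, and $L(x,y)=L$ exactly for these $y$. As an ordered sum, $F_{S,T}$ therefore changes by at most $c=2p$. This gives
\[
\sum_i c_i^2\le N(p+1)\cdot4p^2\le 8Np^3 .
\]
Applying Lemma~\ref{lem:bounded-differences} with $a=\varepsilon N^2/2$ yields
\[
\Prob\bigl(|F_{S,T}-\E F_{S,T}|\ge \varepsilon N^2/2\bigr)\le 2\exp\!\left(-\frac{\varepsilon^2N^4/2}{8Np^3}\right)=2\exp\!\left(-\frac{\varepsilon^2N^3}{16p^3}\right).
\]

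\textbf{Step 3: the union bound.} Using $N\ge p^2$, we get $N^3/p^3\ge N\cdot N^2/p^3\ge Np$. The exponent is then at least $\varepsilon^2pN/16\ge 4N$, where we use $\varepsilon^2p\ge64$. Summing over the $4^N$ pairs $(S,T)$ gives a failure probability of at most
\[
2\cdot 4^N e^{-4N}\le 2e^{-2N},
\]
since $\log 4<2$.

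\textbf{Main obstacle.} The only delicate point is Step 2. We must correctly identify which entries depend on each label, namely only the pairs whose common line is $L$. This is what makes each $c_i$ of size $O(p)$ rather than $O(N)$. The argument also needs enough slack in the constants to absorb both the factor $2$ from ordered pairs and the diagonal correction from Step 1. If the constants turn out tight, I would instead use $a=\varepsilon N^2-N$, together with $\varepsilon N\ge 2$, which gives $a\ge \varepsilon N^2/2$.
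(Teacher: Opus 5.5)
Your proposal is correct and follows essentially the same route as the paper. Both arguments apply McDiarmid over the $N(p+1)$ incidence labels with bounded difference $2p$ for a fixed pair $(S,T)$, absorb the diagonal correction $\sigma|S\cap T|\le N$ via $\varepsilon N\ge 2$ with $a=\varepsilon N^2/2$, and finish with a union bound over the $4^N$ pairs using $N\ge p^2$ and $\varepsilon^2p\ge 64$. The only difference is cosmetic: you use $p+1\le 2p$ inside $\sum_i c_i^2$ rather than at the final simplification, and this yields the same exponent bound $\varepsilon^2pN/16$.
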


\begin{proof}
Fix two subsets $S,T\subseteq V$, and define
$X_{S,T}=\sum_{x\in S,\,y\in T}Z_\Gamma(x,y)$.
Thus $X_{S,T}$ counts the ordered pairs $(x,y)\in S\times T$ with
$x\ne y$ for which $xy$ is a nonedge of the host graph $\Gamma$.

For every pair of distinct vertices $x,y$, we have
$\E Z_\Gamma(x,y)=\sigma$
by~\eqref{eq:nonedge-expectation}. On the other hand,
$Z_\Gamma(x,x)=0$ for every $x\in V$. 
There are $|S||T|$ ordered pairs in $S\times T$. 
Exactly $S\cap T$ of them are diagonal pairs $(x,x)$, which satisfies $Z_\Gamma(x,x)=0$. 
Every other pair $(x,y)$ satisfies $\E Z_\Gamma(x,y)=\sigma$. Hence, by linearity of expectation,
\begin{equation}\label{eq:cut-mean}
\E X_{S,T}
=
\sigma\bigl(|S||T|-|S\cap T|\bigr).
\end{equation}

We next estimate how much $X_{S,T}$ can change when one incidence label is changed. 
Recall that the random host graph is determined by the
$N(p+1)$ independent labels $\lambda_L(x)$ with $x\in L$.
Fix one incidence $(x,L)$ and change only the value of $\lambda_L(x)$, leaving all other incidence labels unchanged.

The adjacency status of a pair containing $x$ can be affected only when the other endpoint also lies on $L$. 
Since $L$ contains $p+1$ points, there are exactly $p$ such unordered pairs $\{x,y\}$ with $y\in L\setminus \{x\}$.
No pair determined by a different line can be affected because the label of $x$ on another line $L'$ is the different random variable $\lambda_{L'}(x)$.

Each affected unordered pair $\{x,y\}$ contributes at most two ordered
pairs to the sum defining $X_{S,T}$, namely $(x,y)$ and $(y,x)$.
Therefore, changing one incidence label changes $X_{S,T}$ by at most
$2p$.

We may now apply Lemma \ref{lem:bounded-differences} to the
$N(p+1)$ independent incidence labels. Since the change caused by any
one label is at most $2p$, for every $a>0$ we obtain
\begin{equation}\label{eq:fixed-cut-tail}
\Prob\bigl(
 |X_{S,T}-\E X_{S,T}|\ge a
\bigr)
\le
2\exp\left(
-\frac{2a^2}{4p^2N(p+1)}
\right).
\end{equation}

The quantity appearing in the cut norm is not
$X_{S,T}-\E X_{S,T}$, but rather
$X_{S,T}-\sigma|S||T|$.
The two centres differ only because of the diagonal terms. Indeed, by~\eqref{eq:cut-mean},
$\bigl|\E X_{S,T}-\sigma|S||T|\bigr| = \sigma|S\cap T| \le N$.
Hence, by the triangle inequality, we have 
$$|X_{S,T}-\E X_{S,T}| \ge \bigl|X_{S,T}-\sigma|S||T|\bigr| - \bigl|\E X_{S,T}-\sigma|S||T|\bigr| \ge \bigl|X_{S,T}-\sigma|S||T|\bigr|-N.$$

Suppose now that
$\bigl|X_{S,T}-\sigma|S||T|\bigr| > \varepsilon N^2$.
Then
$|X_{S,T}-\E X_{S,T}| > \varepsilon N^2-N$.
Since $\varepsilon N\ge2$, we have
$N\le\frac{\varepsilon N^2}{2}$,
and therefore
$\varepsilon N^2-N \ge \frac{\varepsilon N^2}{2}$.
Consequently,
$\bigl|X_{S,T}-\sigma|S||T|\bigr| > \varepsilon N^2$
implies
$|X_{S,T}-\E X_{S,T}| > \frac{\varepsilon N^2}{2}$.

Applying~\eqref{eq:fixed-cut-tail} with
$a=\varepsilon N^2/2$, we obtain, for this fixed pair $S,T$,
\[
\Prob\left( \bigl|X_{S,T}-\sigma|S||T|\bigr| > \varepsilon N^2 \right) \le \Prob\left(|X_{S,T}-\E X_{S,T}|>\frac{\varepsilon N^2}{2}\right) \le 2\exp\left( -\frac{\varepsilon^2N^3}{8p^2(p+1)} \right).
\]

We now pass from a fixed pair $S,T$ to all pairs of subsets.
By the definition of the normalized cut norm, we have 
\[
\cut{Z_\Gamma-\sigma} = \frac1{N^2} \max_{S,T\subseteq V} \left| \sum_{x\in S,\,y\in T} \bigl(Z_\Gamma(x,y)-\sigma\bigr) \right|.
\]
Since
$\sum_{x\in S,\,y\in T} \bigl(Z_\Gamma(x,y)-\sigma\bigr) = X_{S,T}-\sigma|S||T|$,
the event
$\cut{Z_\Gamma-\sigma}>\varepsilon$
is equivalent to the existence of subsets $S,T\subseteq V$ such that
$\bigl|X_{S,T}-\sigma|S||T|\bigr| > \varepsilon N^2$.

There are $2^N$ choices for $S$ and independently $2^N$ choices for
$T$, so there are at most $4^N$ ordered pairs $(S,T)$. Hence, by the
union bound,
\begin{equation}\label{eq:cut-union-bound}
\Prob\bigl(
\cut{Z_\Gamma-\sigma}>\varepsilon
\bigr)
\le
2\cdot4^N
\exp\left(
-\frac{\varepsilon^2N^3}{8p^2(p+1)}
\right).
\end{equation}

It remains to simplify the exponent. Since
$N=p^2+p+1\ge p^2$
and $p+1\le2p$, we have
$$\frac{N^3}{8p^2(p+1)} = N\frac{N^2}{8p^2(p+1)} \ge N\frac{p^4}{8p^2\cdot2p} = \frac{Np}{16}.$$
Therefore the right-hand side of
\eqref{eq:cut-union-bound} is at most
$2\exp\left[ N\left( \log4-\frac{\varepsilon^2p}{16} \right) \right]$.
Finally, the assumption $\varepsilon^2p\ge64$ gives
$\frac{\varepsilon^2p}{16}\ge4$,
and hence, since $\log4<2$,
$\log4-\frac{\varepsilon^2p}{16} \le \log4-4 < -2$.
Thus
$\Prob\bigl( \cut{Z_\Gamma-\sigma}>\varepsilon \bigr) \le 2e^{-2N}$,
as required.
\end{proof}

The edges of the random host graph are not independent. The independent
variables in the proof of Lemma~\ref{lem:nonedge-cut} are exclusively
the point--line labels. The influence bound $2p$ accounts for all
ordered pairs that a single label can affect, including when $S$ and
$T$ overlap.

\section{Proof of the main theorem}\label{sec:main-proof}

\begin{proof}[Proof of Theorem~\ref{thm:main}]
Fix $k,t\ge2$, a graph $G$ with $|V(G)|=k$, and a graph $H$ with
$V(H)=[t]$. If $H$ has no edges, the edgeless graph on $t$ vertices
already contains a blue induced copy of $H$ in every edge-coloring,
so the theorem holds. We may therefore assume that $e(H)\ge1$.

Choose the parameters
$b=10k$, $A=30k^2$, $\rho=t^{-10}$ and $\varepsilon=t^{-12k^2}$.
By Bertrand's postulate, there is a prime $p$ with
$t^A\le p\le2t^A$.
Take the random host  graph of Section~\ref{sec:host} on
$N=p^2+p+1$ vertices.

Since $k\ge2$, then $A-b-2=30k^2-10k-2\ge9$.
This, together with $t\ge 2$, implies that $p\ge t^A\ge t^{b+2}t^9\ge512t^{b+2}$.
Thus Lemma~\ref{lem:labels} can apply. 
Since $\varepsilon^2p\ge t^{6k^2}\ge64$ and
$\varepsilon N\ge\varepsilon p^2\ge t^{48k^2}\ge2$, Lemma~\ref{lem:nonedge-cut} also applies. Recall that Lemma~\ref{lem:labels} fails with probability at most $e^{-2N}$, while Lemma~\ref{lem:nonedge-cut} fails with probability at most $2e^{-2N}$.
Therefore, by the union bound, the probability that both conclusions
hold simultaneously is at least
 $1-e^{-2N}-2e^{-2N}=1-3e^{-2N}>0$.
Hence there exists at least one realization of the incidence labels for which both conclusions hold. Fix such a realization and let $\Gamma$ denote the resulting host graph.
In particular, Lemma~\ref{lem:labels} holds for this fixed host graph and $\cut{Z_\Gamma-\sigma}\le\varepsilon$.

Consider any red--blue coloring of $E(\Gamma)$, and let $R$ be its red spanning subgraph. If the coloring contains a blue induced copy of $H$, then we are done. 
Otherwise, by Lemma~\ref{lem:red-density}, we have
$e_R(U)\ge\frac{|U|^2}{128t^2}$ for every $U\subseteq V$ with $|U|\ge N/t^{10k}$.
Since $t^8\ge128$ for $t\ge2$, we have $\frac{1}{128t^2}\ge\rho$.
Also, $\frac{N}{t^{10k}}=\rho^kN$. Therefore,
 $e_R(U)\ge\rho|U|^2$
when $|U|\ge\rho^kN$.
To apply Lemma~\ref{lem:local-count}, we need the same inequality for
all sets satisfying
$|U|\ge\rho^{k-1}N$.
Since $0<\rho<1$, we have
$\rho^{k-1}\ge\rho^k$.
Therefore every set with $|U|\ge\rho^{k-1}N$ also satisfies
$|U|\ge\rho^kN$. Then
 $\operatorname{hom}(G,R)\ge\rho^{k^2}N^k=t^{-10k^2}N^k$.

Let
$m=\binom{k}{2}-e(G)$,
so that $m$ is the number of nonedges of $G$. By
Lemma~\ref{lem:mixed-count}, we can obtain that
$\left|I_G(R,Z_\Gamma)-\sigma^m\operatorname{hom}(G,R)\right|\le m\varepsilon N^k$, that is,
$I_G(R,Z_\Gamma)\ge \sigma^m\operatorname{hom}(G,R)-m\varepsilon N^k$.
Since $\sigma\ge\frac{1}{t}$ and
$\varepsilon=t^{-12k^2}$, it follows that
\begin{align}
I_G(R,Z_\Gamma)
&\ge
\sigma^m\operatorname{hom}(G,R)
-
m\varepsilon N^k
\notag\\
&\ge
\left(
t^{-m}t^{-10k^2}
-
mt^{-12k^2}
\right)N^k
\notag\\
&=
t^{-12k^2}
\left(
t^{2k^2-m}-m
\right)N^k.\notag
\end{align}
Since
$m\le\binom{k}{2}<k^2$,
we have
$2k^2-m>k^2$.
Since $t\ge2$, then
$t^{2k^2-m}>t^{k^2}\geq 2^{k^2}$.
Moreover,
$m\le\binom{k}{2}<k^2<2^{k^2}$
for every $k\ge2$. Hence
$t^{2k^2-m}>m$.
Then
$I_G(R,Z_\Gamma)>0$.

We now explain explicitly why this implies the existence of a red
induced copy of $G$ in the host graph $\Gamma$. By definition, we have 
$$I_G(R,Z_\Gamma)
=
\sum_{(x_1,\ldots,x_k)\in V(\Gamma)^k}
\left(
\prod_{ij\in E(G)}R(x_i,x_j)
\prod_{ij\in\overline E(G)}
Z_\Gamma(x_i,x_j)
\right).$$
Every factor in every summand is either $0$ or $1$. 
Since the total sum is positive, there exists at least one tuple
$(x_1,\ldots,x_k)\in V(\Gamma)^k$ such that $$\prod_{ij\in E(G)}R(x_i,x_j)
\prod_{ij\in\overline E(G)}
Z_\Gamma(x_i,x_j)=1.$$
Then we have $R(x_i,x_j)=1$ for each $ij\in E(G)$, implying that $x_ix_j\in R$.
Also, for each $ij\in \overline E(G)$, we have $Z_\Gamma(x_i,x_j)=1$, implying that $ij\notin E(G)$.
In particular, since either $R(x,x)=0$ or $Z_\Gamma(x,x)=0$ for any vertex $x\in V$, $(x_1,\ldots ,x_k)$ are pairwise distinct $k$ vertices.
Therefore, $(x_1,\ldots ,x_k)$ induces a red copy of $G$, and $\Gamma\xrightarrow{\mathrm{ind}}(G,H)$.

Since
$p\le2t^A$, we have
$N=p^2+p+1\le 4t^{2A}+2t^A+1\le 7t^{2A}$.
Since $A=30k^2$, this gives
$N\le7t^{60k^2}$.
Also, $t\ge2$, we have $7\le t^3$, and hence
$7t^{60k^2}\le t^{60k^2+3}$.
Hence
$r_{\mathrm{ind}}(G,H)\le 7t^{60k^2}\le t^{60k^2+3}$.
All the estimates above hold for every $k,t\ge2$, so no additional asymptotic or small-parameter cases remain.
\end{proof}

\end{document}